\def\etal{\mbox{et al.}}
\def\soft#1{\leavevmode\setbox0=\hbox{h}\dimen7=\ht0\advance
\dimen7 by-1ex\relax\if t#1\relax\rlap{\raise.6\dimen7
\hbox{\kern.3ex\char'47}}#1\relax\else\if T#1\relax
\rlap{\raise.5\dimen7\hbox{\kern1.3ex\char'47}}#1\relax
\else\if d#1\relax\rlap{\raise.5\dimen7\hbox{\kern.9ex
\char'47}}#1\relax\else\if D#1\relax\rlap{\raise.5\dimen7
\hbox{\kern1.4ex\char'47}}#1\relax\else\if l#1\relax
\rlap{\raise.5\dimen7\hbox{\kern.4ex\char'47}}#1\relax
\else\if L#1\relax\rlap{\raise.5\dimen7\hbox{\kern.7ex
\char'47}}#1\relax\else\message{accent \string\soft
\space #1 not defined!}#1\relax\fi\fi\fi\fi\fi\fi}
\begin{document}

\markboth{A. Debbouche and D. F. M. Torres}{Approximate Controllability of Fractional
Nonlocal Delay Semilinear Systems in Hilbert Spaces}

\title{Approximate Controllability of Fractional
Nonlocal Delay Semilinear Systems in Hilbert Spaces}

\author{Amar Debbouche$^{a, b}$
and Delfim F. M. Torres$^{b}$$^{\ast}$\thanks{$^\ast$Corresponding author.
Email: delfim@ua.pt\vspace{6pt}}\\\vspace{6pt}
$^{a}${\em{Department of Mathematics, Guelma University, Guelma, Algeria}};
$^{b}${\em{CIDMA -- Center for Research and Development in Mathematics and Applications,
Department of Mathematics, University of Aveiro,
3810-193 Aveiro, Portugal}}\\\vspace{6pt}\received{Submitted 24-Sept-2012; revised 28-Feb-2013; accepted 29-Mar-2013}}

\maketitle


\begin{abstract}
We study the existence and approximate controllability
of a class of fractional nonlocal delay semilinear
differential systems in a Hilbert space. The results
are obtained by using semigroup theory, fractional calculus,
and Schauder's fixed point theorem. Multi-delay controls
and a fractional nonlocal condition are introduced.
Furthermore, we present an appropriate set of
sufficient conditions for the considered fractional nonlocal
multi-delay control system to be approximately controllable.
An example to illustrate the abstract results is given.

\bigskip

\begin{keywords}
approximate controllability;
fractional multi-delay control system;
fractional nonlocal condition;
Schauder's fixed point theorem;
semigroups.
\end{keywords}

\bigskip

\end{abstract}




\section{Introduction}

We are concerned with the fractional delay control system
\begin{equation}
\label{eq:1.1}
^{C}D^{\alpha}_{t}u(t)+Au(t)=F(t, W_{\delta}(t))+ V_{\sigma}(t)
\end{equation}
subject to the fractional nonlocal condition
\begin{equation}
\label{eq:1.2}
^{L}D^{1-\alpha}_{t}[u(0)-u_{0}]=h[u(t)],
\end{equation}
where the unknown $u(\cdot)$ takes its values in a Hilbert space $H$
with norm $\Vert\cdot\Vert$, $^{C}D^{\alpha}_{t}$ and
$^{L}D^{1-\alpha}_{t}$ are the Caputo and Riemann-Liouville
fractional derivatives with $0<\alpha\leq1$ and $t\in
J=[0,a]$, respectively. Let $-A$ be a closed linear operator defined on a dense
set $S$ that generates a C$_{0}-$semigroup $Q(t), ~t\geq0$, of
bounded operators on $H$, $u_{0}\in S$. We assume that
$W_{\delta}=(A_{1}u_{\delta_{1}},\ldots,A_{p}u_{\delta_{p}})$ and
$V_{\sigma}=(B_{1}\mu_{\sigma_{1}}+\cdots+B_{q}\mu_{\sigma_{q}})$
are such that $\lbrace A_{i}(t):i=1,\ldots,p,~t\in J\rbrace$ is a family
of closed linear operators defined on dense sets
$S_{1},\ldots,S_{p}\supset S$ from $H$ into $H$ and
$\lbrace B_{j}(t): U\rightarrow H,~ j=1,\ldots,q,~t\in J\rbrace$ is
a family of bounded linear operators. The control function $\mu$
belongs to the space $L^{2}(J, U)$, a Hilbert space of admissible
control functions with $U$ as a Hilbert space, $\delta_{i},
\sigma_{j}: J\rightarrow J^{\prime}$ are delay arguments,
$i=1,\ldots,r$, $j=1,\ldots,s$, and $J^{\prime}=[0, t]$. The operators
$F: J\times H^{p}\rightarrow H$ and $h: C(J: H)\rightarrow H$ are
given abstract functions.

During the last decades, fractional differential equations have attract
the attention of many mathematicians, physicists and engineers
--- see, e.g., \cite{AMA.1,AMA.6,AMA.10,AMA.14,AMA.18,AMA.30}.
The reason is that real phenomena, such as dielectric and
electrode-electrolyte polarization, electromagnetic waves,
earthquakes, fluid dynamics, traffic, viscoelasticity and viscoplasticity,
can be described successfully and more accurately
using fractional models ---  see \cite{AMA.16,AMA.19,AMA.20,AMA.23,AMA.26}.
Fractional evolution equations with nonlocal conditions have
been studied in many works --- see \cite{AMA.5,AMA.7,AMA.21,AMA.33}
and references therein. Existence results to evolution equations
with nonlocal conditions in a Banach space
were first studied in \cite{AMA.3,AMA.4}.
In \cite{AMA.12} it is shown that, using the nonlocal condition
$u(0)+h(u)=u_{0}$ to describe, for instance, the diffusion
phenomenon of a small amount of gas in a transparent tube, can give
better results than using the standard local Cauchy condition
$u(0) = u_{0}$. According to \cite{AMA.12}, function $h$ takes
the form $h(u)=\sum_{k=1}^{p}c_{k}u(t_{k})$, where $c_{k}$,
$k = 1,\ldots,p$, are given constants and
$0 \leq t_{1}<\cdots<t_{p}\leq a$.
In \cite{MR2931380} the controllability of semilinear fractional differential
equations with nonlocal conditions in a Banach space
is investigated through the M\"{o}nch fixed point theorem, where
the semigroup generated by the linear part is not necessarily compact
but the nonlinear term satisfies some weak compactness condition.
The existence of mild solutions for
impulsive fractional evolution Cauchy problems
involving Caputo fractional derivatives is discussed
in \cite{MR2901608} by means of the theory of operators semigroup
and probability density functions via impulsive conditions,
while the solvability and optimal control of a class of fractional integrodifferential
evolution systems with an infinite delay in Banach spaces is studied in \cite{MR2872510}:
a concept for solution is introduced, existence and continuous dependence of solutions are investigated,
and existence of optimal controls proved. In \cite{MR2910321}
optimal feedback control laws for Lagrange problems
subject to semilinear fractional-order systems
in Banach spaces are established.

Exact controllability of fractional order systems has been proved
by many authors: \cite{AMA.2,AMA.8,AMA.9,AMA.28,AMA.29}. The main
tool is to convert the controllability question into a fixed point
problem with the assumption that the controllability operator has an
induced inverse on a quotient space. To prove controllability,
an assumption that the semigroup (resp. resolvent operator)
associated with the linear part is compact is then often made. However,
if the compactness condition holds on the bounded operator that maps
the control function or the generated $C_0$-semigroup, then the
controllability operator is also compact and its inverse
does not exist if the state space is infinite dimensional --- see
\cite{AMA.28}. Thus, the concept of exact controllability is too
strong in infinite dimensional spaces and the approximate
controllability notion is more appropriate.

Approximate controllability of integer order systems has been proved
in several works. In contrast, papers dealing with the approximate
controllability of fractional order systems are scarce. Recently,
the subject was addressed  in \cite{AMA.25,AMA.24},
while sufficient conditions for the (delay) approximate controllability
of fractional order systems, in which the nonlinear term depends
on both state and control variables, are investigated in \cite{AMA.17,AMA.27},
and the case of partial neutral fractional functional
differential systems with a state-dependent delay
is considered in \cite{AMA.31}.

Our main objective is to study the approximate
controllability of semilinear fractional control systems, where the
control function depends on multi-delay arguments and where the nonlocal
condition is fractional. The result is obtained under the assumption
that the associated linear system is approximately controllable. In
particular, the controllability question is transformed to a fixed
point problem for an appropriate nonlinear operator in a function
space. For that we need to construct a suitable set of
sufficient conditions.
The paper is organized as follows: in
Section~\ref{sec:2}, we present some essential definitions of
fractional calculus and basic facts in the semigroup theory that
will be used to obtain our main results. In Section~\ref{sec:3},
we state and prove existence and approximate
controllability results for problem \eqref{eq:1.1}--\eqref{eq:1.2}.
Finally, in Section~\ref{sec:4} we illustrate the new
results of the paper with an example.


\section{Preliminaries}
\label{sec:2}

In this section, we introduce some basic definitions, notations and lemmas,
which will be used throughout the work. In particular, we give necessary properties
of fractional calculus (see \cite{AMA.16,AMA.23,AMA.26}) and some fundamental facts
in semigroup theory (see \cite{AMA.15,AMA.22,AMA.32}).

Let $(H, \Vert\cdot\Vert)$ be a Hilbert space, $C(J, H)$ denote the Hilbert space
of continuous functions from $J$ into $H$ with the norm
$\Vert u\Vert_{J}=\sup \lbrace\Vert u(t)\Vert: t\in J\rbrace$, and $L(H)$
be the Hilbert space of bounded linear operators from $H$ to $H$. Further,
let $E(H)$ be the space of all bounded linear operators from $H$ to $H$
with the norm $\Vert G\Vert_{E(H)}=\sup\lbrace\Vert G(u)\Vert: \Vert u\Vert=1\rbrace$,
where $ G\in E(H)$ and $u\in H$. Throughout the paper, let $-A$ be the infinitesimal
generator of the C$_{0}-$semigroup ${Q(t)}$, $t\geq 0$, of uniformly bounded linear
operators on $H$. Clearly, $M=\sup_{t\in [0, \infty)}\Vert Q(t)\Vert<\infty$.

\begin{definition}
\label{def:2.1}
The fractional integral of order $\alpha>0$
of a function $f\in C([0, \infty))$ is given by
$$
I^{\alpha}f(t) := \frac{1}{\Gamma(\alpha)}
\int_{0}^{t}\frac{f(s)}{(t-s)^{1-\alpha}}ds,
\quad t>0,
$$
where $\Gamma$ is the gamma function, provided the right-hand side
is point-wise defined on $[0, \infty)$.
\end{definition}

\begin{definition}
\label{def:2.2}
The Riemann--Liouville derivative of order $\alpha > 0$
of a function $f\in C([0, \infty))$ is given by
$$
^{L}D^{\alpha}f(t) := \frac{1}{\Gamma(n-\alpha)}\frac{d^{n}}{dt^{n}}
\int_{0}^{t}\frac{f(s)}{(t-s)^{\alpha+1-n}}ds,
\quad t>0,
$$
where $n\in \mathbb{N}$ is such that $n-1<\alpha<n$.
\end{definition}

\begin{definition}
\label{def:2.3}
The Caputo derivative of order $\alpha > 0$
of a function $f\in C([0,\infty))$ is given by
$$
^{C}D^{\alpha}f(t) := {^{L}D^{\alpha}}\left(f(t)
-\sum\limits_{k=0}^{n-1}\frac{t^{k}}{k!}f^{(k)}(0)\right),
\quad t>0,
$$
where $n\in \mathbb{N}$ is such that $n-1<\alpha<n$.
\end{definition}

\begin{remark}
\label{rem:2.1}
The following properties hold:
\begin{enumerate}
\item If $f\in C^{n}([0, \infty))$, then
$$
^{C}D^{\alpha}f(t) = \frac{1}{\Gamma(n-\alpha)}
\int_{0}^{t}\frac{f^{(n)}(s)}{(t-s)^{\alpha+1-n}}ds
=I^{n-\alpha}f^{n}(t), \quad t>0, \quad n-1<\alpha<n.
$$

\item The Caputo derivative of a constant is equal to zero.

\item If $f$ is an abstract function with values in $H$,
then the integrals which appear in Definitions~\ref{def:2.1}--\ref{def:2.3}
are taken in Bochner's sense.
\end{enumerate}
\end{remark}

According to previous definitions, it is suitable to rewrite the
problem \eqref{eq:1.1}--\eqref{eq:1.2} in the equivalent integral form
\begin{equation}
\label{eq:2.1}
u(t)=u(0)+\frac{1}{\Gamma(\alpha)}\int_{0}^{t}(t-s)^{\alpha-1}
[-Au(s)+F(s, W_{\delta}(s))+ V_{\sigma}(s)]ds.
\end{equation}

\begin{remark}
\label{rem:2.2}
We note that:
\begin{enumerate}
\item For the nonlocal condition, the function $u(0)$ is dependent on $t$.

\item The Riemann--Liouville fractional derivative of $u(0)-u_{0}$
is well defined and $^{L}D^{1-\alpha}_{t}u_{0}\neq0$.

\item The function $u(0)$ takes the form $u_{0}+v_{0}+\frac{1}{\Gamma(1-\alpha)}
\int_{0}^{t}\frac{h[u(s)]}{(t-s)^{\alpha}}ds$, where $u(0)-u_{0}|_{t=0}=v_{0}$.

\item The explicit and implicit integrals given in \eqref{eq:2.1}
exist (taken in Bochner's sense).
\end{enumerate}
\end{remark}

\begin{definition}
\label{def:2.4}
A state $u \in C(J, H)$ is a mild solution of \eqref{eq:1.1}--\eqref{eq:1.2}
if, for each control $\mu\in L^{2}(J, U)$,
it satisfies the following integral equation:
$$
u(t)=S_{\alpha}(t)\left[u_{0}+v_{0}+\frac{1}{\Gamma(1-\alpha)}
\int_{0}^{t}\frac{h[u(s)]}{(t-s)^{\alpha}}ds\right]
+\int_{0}^{t}(t-s)^{\alpha-1}T_{\alpha}(t-s)
[F(s, W_{\delta}(s))+ V_{\sigma}(s)] ds,
$$
where
$$
S_{\alpha}(t)=\int_{0}^{\infty}\zeta_{\alpha}(\theta)Q(t^{\alpha}\theta)d\theta,
\quad T_{\alpha}(t)=\alpha\int_{0}^{\infty}\theta\zeta_{\alpha}(\theta)Q(t^{\alpha}\theta)d\theta,
$$
$$
\zeta_{\alpha}(\theta)=\frac{1}{\alpha}\theta^{-1-\frac{1}{\alpha}}
\varpi_{\alpha}(\theta^{-\frac{1}{\alpha}}) \geq 0,
\quad \varpi_{\alpha}(\theta)=\frac{1}{\pi}\sum_{n=1}^{\infty}
(-1)^{n-1}\theta^{-\alpha n-1}\frac{\Gamma(n\alpha+1)}{n!}
\sin(n\pi\alpha), \theta\in (0, \infty).
$$
\end{definition}

\begin{remark}
In Definition~\ref{def:2.4}, $\zeta_{\alpha}$
is a probability density function defined on $(0, \infty)$,
that is, $\zeta_{\alpha}(\theta)\geq 0$, $\theta\in(0, \infty)$, and
$\int_{0}^{\infty}\zeta_{\alpha}(\theta)d\theta=1$
(compare with \cite{AMA.11,AMA.13}; see also \cite{AMA.33,AMA.34}).
\end{remark}

The following lemma can be found in \cite{AMA.11,AMA.34}.

\begin{lemma}
\label{lem:2.1}
The operators $S_{\alpha}(t)$ and $T_{\alpha}(t)$
have the following properties:
\begin{enumerate}
\item
\label{lem:2.1:a}
For any fixed $t\geq0$, the operators $S_{\alpha}(t)$
and $T_{\alpha}(t)$ are linear and bounded, i.e.,
for any $u\in H$, $\Vert S_{\alpha}(t)u\Vert\leq M\Vert u\Vert$ and
$\Vert T_{\alpha}(t)u\Vert\leq \frac{M\alpha}{\Gamma (1+\alpha)}\Vert u\Vert$.

\item
\label{lem:2.1:b}
$\lbrace S_{\alpha}(t), t\geq0\rbrace$
and $\lbrace T_{\alpha}(t), t\geq0\rbrace$ are strongly continuous,
i.e., for $u\in H$ and $0\leq t_{1}<t_{2}\leq a$, one has
$\Vert S_{\alpha}(t_{2})u-S_{\alpha}(t_{1})u\Vert\rightarrow 0$
and $\Vert T_{\alpha}(t_{2})u-T_{\alpha}(t_{1})u\Vert\rightarrow 0$
as $t_{1}\rightarrow t_{2}$.
\end{enumerate}
\end{lemma}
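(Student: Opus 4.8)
The plan is to derive the properties of $S_\alpha(t)$ and $T_\alpha(t)$ directly from their integral representations against the probability density $\zeta_\alpha$ and the uniform bound $M=\sup_{t\geq 0}\|Q(t)\|$. For part~\ref{lem:2.1:a}, linearity is immediate since $S_\alpha(t)$ and $T_\alpha(t)$ are Bochner integrals of linear operators applied to $u$, hence linear in $u$. For the bounds, I would estimate under the integral sign: for any $u\in H$,
\[
\|S_\alpha(t)u\| \leq \int_0^\infty \zeta_\alpha(\theta)\,\|Q(t^\alpha\theta)u\|\,d\theta
\leq M\|u\|\int_0^\infty \zeta_\alpha(\theta)\,d\theta = M\|u\|,
\]
using $\zeta_\alpha\geq 0$ and $\int_0^\infty\zeta_\alpha(\theta)\,d\theta=1$ from the preceding Remark. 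For $T_\alpha(t)$ the same computation gives $\|T_\alpha(t)u\| \leq \alpha M\|u\|\int_0^\infty \theta\,\zeta_\alpha(\theta)\,d\theta$, so it remains to evaluate the first moment of $\zeta_\alpha$. I would use the known identity $\int_0^\infty \theta\,\zeta_\alpha(\theta)\,d\theta = \frac{1}{\Gamma(1+\alpha)}$ (equivalently $\int_0^\infty \theta^{\,\nu}\zeta_\alpha(\theta)\,d\theta = \frac{\Gamma(1+\nu)}{\Gamma(1+\alpha\nu)}$ with $\nu=1$), which is a standard property of the one-sided stable density $\varpi_\alpha$ and is recorded in the references \cite{AMA.11,AMA.34}; this yields $\|T_\alpha(t)u\| \leq \frac{M\alpha}{\Gamma(1+\alpha)}\|u\|$.

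For part~\ref{lem:2.1:b}, I would fix $u\in H$ and $0\le t_1<t_2\le a$ and write
\[
\|S_\alpha(t_2)u - S_\alpha(t_1)u\|
\leq \int_0^\infty \zeta_\alpha(\theta)\,\big\|Q(t_2^\alpha\theta)u - Q(t_1^\alpha\theta)u\big\|\,d\theta .
\]
By strong continuity of the $C_0$-semigroup $Q(\cdot)$, the integrand tends to $0$ pointwise in $\theta$ as $t_1\to t_2$, and it is dominated by $2M\|u\|\,\zeta_\alpha(\theta)$, which is integrable. The dominated convergence theorem then gives $\|S_\alpha(t_2)u-S_\alpha(t_1)u\|\to 0$. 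The argument for $T_\alpha$ is identical, with dominating function $2\alpha M\|u\|\,\theta\,\zeta_\alpha(\theta)$, integrable because the first moment of $\zeta_\alpha$ is finite as established above.

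The only genuinely nontrivial point is justifying the value $\int_0^\infty\theta\,\zeta_\alpha(\theta)\,d\theta=\frac{1}{\Gamma(1+\alpha)}$ and, more basically, the absolute convergence of the defining integrals so that the Bochner integrals make sense and the estimates above are legitimate; both follow from the moment formula for the stable density, which I would simply cite from \cite{AMA.11,AMA.34} rather than reprove. Everything else is an application of $\zeta_\alpha\ge 0$, $\int\zeta_\alpha=1$, the uniform semigroup bound $M$, and dominated convergence, so I expect the write-up to be short.
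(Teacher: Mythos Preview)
Your argument is correct and is precisely the standard one: linearity from the Bochner integral, the bounds from $\int_0^\infty\zeta_\alpha=1$ and $\int_0^\infty\theta\,\zeta_\alpha(\theta)\,d\theta=1/\Gamma(1+\alpha)$, and strong continuity via dominated convergence using the strong continuity of $Q(\cdot)$. Note, however, that the paper does not supply its own proof of this lemma at all; it simply records the statement and cites \cite{AMA.11,AMA.34}, where exactly the computation you outline is carried out. So there is no ``paper's proof'' to compare against beyond those references, and your write-up would serve as a self-contained substitute for that citation.
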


Motivated by the recent works of \cite{AMA.8,AMA.17,AMA.24,AMA.25,AMA.31},
we make use of the following notions and lemmas.

Let $u_{a}(u(0); \mu)$ be the state value of \eqref{eq:1.1}--\eqref{eq:1.2}
at terminal time $a$, corresponding to the control $\mu$ and the nonlocal
value $u(0)$. For every $u_{0}, v_{0}\in H$, we introduce the set
$$
\mathfrak{R}(a, u(0)) := \left\lbrace
u_{a}\left(u_{0}+v_{0}+\frac{1}{\Gamma(1-\alpha)}
\int_{0}^{t}\frac{h[u(s)]}{(t-s)^{\alpha}}ds;
\mu\right)(0) : \mu(\cdot)\in L^{2}(J, U)\right\rbrace,
$$
which is called the reachable set associated with \eqref{eq:1.1}--\eqref{eq:1.2}
at terminal time $a$. Its closure in $H$ is denoted by
$\overline{\mathfrak{R}(a, u(0))}$.

\begin{definition}
\label{def:2.5}
The system \eqref{eq:1.1}--\eqref{eq:1.2} is said
to be approximately controllable on $J$ if
$\overline{\mathfrak{R}(a, u(0))}=H$, that is,
given an arbitrary $\epsilon>0$, it is possible
to steer in time $a$ the system from point $u(0)$ to all
points in the state space $H$ within a distance $\epsilon$.
\end{definition}

Consider the following linear nonlocal fractional multi-delay control system:
\begin{equation}
\label{eq:2.3}
^{C}D^{\alpha}_{t}u(t)+Au(t)
= B_{1}\mu(\sigma_{1}(t))+\cdots+B_{q}\mu(\sigma_{q}(t)),
\end{equation}
\begin{equation}
\label{eq:2.4}
^{L}D^{1-\alpha}_{t}[u(0)-u_{0}]=h[u(t)].
\end{equation}
The approximate controllability for the linear fractional nonlocal
multi-delay control system \eqref{eq:2.3}--\eqref{eq:2.4} is a
natural generalization of the notion of approximate controllability
of a linear first-order control system ($\alpha=1$, $\sigma_{j}(t)=t$,
$j=1$ and $h=0$). It is convenient at this point to introduce the
multi-delay controllability operator associated with
\eqref{eq:2.3}--\eqref{eq:2.4}. One has
\begin{equation*}
\Gamma^{a}_{0, \sigma_{j}}
= \int_{0}^{a}(a-s)^{\alpha-1}T_{\alpha}(a-s)B_{j}B_{j}^{\ast}T_{\alpha}^{\ast}(a-s)ds,
\quad j = 1, \ldots, q,
\end{equation*}
where $B_{j}^{\ast}$ denotes the adjoint of $B_{j}$
and $T_{\alpha}^{\ast}(t)$ is the adjoint
of $T_{\alpha}(t)$. We introduce here the new operator
$\Gamma^{a}_{0,\sigma} = \Gamma^{a}_{0, \sigma_{1}, \ldots,\sigma_{q}}$
of multi-delay controllability associated with \eqref{eq:2.3}--\eqref{eq:2.4} as
$$
\Gamma^{a}_{0, \sigma} := \int_{0}^{a}(a-s)^{\alpha-1}
T_{\alpha}(a-s)[B_{1}B_{1}^{\ast}+\cdots
+B_{q-1}B_{q-1}^{\ast}+B_{q}B_{q}^{\ast}]T_{\alpha}^{\ast}(a-s)ds.
$$

It is straightforward to see that $\Gamma^{a}_{0, \sigma}$
is a linear bounded positive operator. The following lemma
is proved in \cite{MR1720139} for linear positive operators
in Hilbert spaces, while a Banach space version is given in
\cite{MR2046377}. See also \cite{AMA.25,AMA.24}.

\begin{lemma}
\label{lem:2.2}
Let $\mathcal{R}(\beta, \Gamma^{a}_{0, \sigma})
=(\beta I+\Gamma^{a}_{0, \sigma})^{-1}$ for $\beta>0$.
The linear fractional control system \eqref{eq:2.3}--\eqref{eq:2.4}
is approximately controllable on $J$ if and only if
$\beta\mathcal{R}(\beta, \Gamma^{a}_{0, \sigma})\rightarrow 0$
as $\beta\rightarrow 0^{+}$ in the strong operator topology.
\end{lemma}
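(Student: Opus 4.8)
The plan is to reduce the stated equivalence to two ingredients: a spectral fact about nonnegative self‑adjoint operators, and the identification of $\ker\Gamma^{a}_{0,\sigma}$ with the kernel of the adjoint of the control‑to‑state map.

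First I would record that $\Gamma := \Gamma^{a}_{0,\sigma}$ is bounded, self‑adjoint and nonnegative, so $\mathcal{R}(\beta,\Gamma)=(\beta I+\Gamma)^{-1}$ is well defined for $\beta>0$. Using the spectral decomposition $\Gamma=\int_{[0,\|\Gamma\|]}\lambda\,dE(\lambda)$ one has $\beta\mathcal{R}(\beta,\Gamma)=\int\beta(\beta+\lambda)^{-1}\,dE(\lambda)$; since $0\le\beta(\beta+\lambda)^{-1}\le 1$ and $\beta(\beta+\lambda)^{-1}\to 0$ for $\lambda>0$ while it equals $1$ at $\lambda=0$, dominated convergence in the spectral integral gives $\beta\mathcal{R}(\beta,\Gamma)z\to E(\{0\})z$ for each $z\in H$, where $E(\{0\})$ is the orthogonal projection onto $\ker\Gamma$. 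Hence $\beta\mathcal{R}(\beta,\Gamma)\to 0$ in the strong operator topology if and only if $\ker\Gamma=\{0\}$, equivalently (as $\Gamma=\Gamma^{\ast}$) if and only if $\overline{\mathrm{Range}\,\Gamma}=H$.

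Next I would identify the reachable set. Define $L\colon L^{2}(J,U)\to H$ by $L\mu=\int_{0}^{a}(a-s)^{\alpha-1}T_{\alpha}(a-s)\sum_{j=1}^{q}B_{j}\mu(\sigma_{j}(s))\,ds$. In the mild‑solution formula of Definition~\ref{def:2.4}, with the nonlocal value $u(0)$ treated as given data, the terms not containing $\mu$ contribute a fixed vector $\xi\in H$, so $\mathfrak{R}(a,u(0))=\xi+\mathrm{Range}\,L$ and \eqref{eq:2.3}--\eqref{eq:2.4} is approximately controllable on $J$ iff $\overline{\mathrm{Range}\,L}=H$, i.e. iff $\ker L^{\ast}=\{0\}$. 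After the change of variables absorbing the delays, $L^{\ast}z$ is represented (using Lemma~\ref{lem:2.1}, boundedness of the $B_{j}$, and the interchange of the Bochner integral with the adjoint) by $s\mapsto\sum_{j}(\cdots)B_{j}^{\ast}T_{\alpha}^{\ast}(a-s)z$, whereas $\langle\Gamma z,z\rangle=\int_{0}^{a}(a-s)^{\alpha-1}\sum_{j=1}^{q}\|B_{j}^{\ast}T_{\alpha}^{\ast}(a-s)z\|^{2}\,ds$. Therefore $\Gamma z=0\iff\langle\Gamma z,z\rangle=0\iff B_{j}^{\ast}T_{\alpha}^{\ast}(a-s)z=0$ for a.e.\ $s$ and all $j\iff L^{\ast}z=0$, so $\ker\Gamma=\ker L^{\ast}$. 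Chaining the three equivalences yields: system \eqref{eq:2.3}--\eqref{eq:2.4} approximately controllable $\iff\overline{\mathrm{Range}\,L}=H\iff\ker L^{\ast}=\{0\}\iff\ker\Gamma=\{0\}\iff\beta\mathcal{R}(\beta,\Gamma)\to 0$ strongly.

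The main obstacle I anticipate is handling the multi‑delay arguments cleanly: to give $L^{\ast}$ its stated form and to match $\ker L^{\ast}$ with $\ker\Gamma^{a}_{0,\sigma}$ (whose definition, as written, does not display the $\sigma_{j}$) one needs appropriate hypotheses on the $\sigma_{j}$ — for instance absolute continuity with derivative bounded away from zero — under which composition with $\sigma_{j}$ is a bounded operator on $L^{2}$ with explicit adjoint, so that the image of the delayed controls has the same closure in $H$ as in the non‑delayed case. Secondary technical points are justifying the interchange of adjoint and Bochner integral and checking, via Remark~\ref{rem:2.2} and Lemma~\ref{lem:2.1}, that $\xi$ is a well‑defined fixed element of $H$.
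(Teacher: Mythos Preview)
Your argument is correct and follows the classical route of Bashirov--Mahmudov (1999) and Mahmudov (2003), which are precisely the references the paper cites: the paper does not supply its own proof of Lemma~\ref{lem:2.2} but states that the result ``is proved in \cite{MR1720139} for linear positive operators in Hilbert spaces, while a Banach space version is given in \cite{MR2046377}.'' The spectral-theorem computation reducing strong convergence of $\beta\mathcal{R}(\beta,\Gamma)$ to injectivity of $\Gamma$, together with the identification $\ker\Gamma=\ker L^{\ast}$ via the controllability Grammian, is exactly the Hilbert-space argument in those sources.

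Your closing caveat is well placed and in fact sharper than the paper itself. The operator $\Gamma^{a}_{0,\sigma}$, as written just before Lemma~\ref{lem:2.2}, does not contain the delay maps $\sigma_{j}$ at all, so it is literally the Grammian of the \emph{undelayed} control-to-state operator; meanwhile the reachable set of \eqref{eq:2.3}--\eqref{eq:2.4} is defined through the delayed controls $\mu\circ\sigma_{j}$. Hypothesis (H$_{5}$) gives only absolute continuity and $|\sigma_{j}(t)|\le t$, which is not enough to conclude that $\{\mu\circ\sigma_{j}:\mu\in L^{2}(J,U)\}$ has the same $L^{2}$-closure as $L^{2}(J,U)$ (for that one typically needs $\sigma_{j}'$ bounded away from zero, or at least surjectivity of $\sigma_{j}$ onto $J'$ with an integrable Jacobian factor). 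So the equivalence you prove is really ``approximate controllability of the undelayed linear system $\iff\beta\mathcal{R}(\beta,\Gamma^{a}_{0,\sigma})\to 0$,'' which is how the paper tacitly uses the lemma downstream (cf.\ the choice of the control $\hat\mu_{\beta}(\sigma_{q}(t))$ in the proof of Theorem~\ref{thm:3.3}, where the delay plays no analytic role). Your proof is complete for the lemma as actually employed; the residual gap lies in the paper's formulation, not in your argument.
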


In the sequel we use Schauder's fixed point theorem that
can be found in any standard textbook of Functional Analysis
(see, e.g., \cite[Theorem 3.2, p. 119]{MR1987179}
or \cite[Theorem 4.1.1, p. 25]{MR0467717}):

\begin{lemma}
\label{lem:2.3}
If $\Omega$ is a closed bounded and convex
subset of a Banach space $X$ and $\psi: \Omega\rightarrow \Omega$ is
completely continuous, then $\psi$ has a fixed point in $\Omega$.
\end{lemma}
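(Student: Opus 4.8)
The plan is to establish the result by the classical finite-dimensional approximation argument, reducing it to Brouwer's fixed point theorem. First I would exploit complete continuity to replace $\Omega$ by a compact set. Since $\psi$ is completely continuous and $\Omega$ is bounded, $\psi(\Omega)$ is relatively compact, so by Mazur's theorem the closed convex hull $K=\overline{\mathrm{conv}}\,\psi(\Omega)$ is compact and convex. As $\Omega$ is closed and convex and contains $\psi(\Omega)$, we get $K\subseteq\Omega$ and hence $\psi(K)\subseteq\psi(\Omega)\subseteq K$; therefore it suffices to find a fixed point of $\psi$ restricted to $K$, and we may assume henceforth that $\Omega$ is itself compact.

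Next I would construct the Schauder projections. Given $\varepsilon>0$, compactness provides a finite $\varepsilon$-net $x_{1},\ldots,x_{n}\in\Omega$; for $x\in\Omega$ set
$$
m_{i}(x)=\max\{0,\ \varepsilon-\Vert x-x_{i}\Vert\},
\qquad
P_{\varepsilon}(x)=\frac{\sum_{i=1}^{n} m_{i}(x)\,x_{i}}{\sum_{i=1}^{n} m_{i}(x)}.
$$
The denominator is strictly positive because $\{x_{i}\}$ is an $\varepsilon$-net, $P_{\varepsilon}$ is continuous, its image lies in the finite-dimensional compact convex polytope $\Omega_{\varepsilon}=\mathrm{conv}\{x_{1},\ldots,x_{n}\}$, and $\Vert P_{\varepsilon}(x)-x\Vert<\varepsilon$ for every $x\in\Omega$, since $m_{i}(x)\neq 0$ forces $\Vert x-x_{i}\Vert<\varepsilon$. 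Then $P_{\varepsilon}\circ\psi$ maps $\Omega_{\varepsilon}$ continuously into itself, and Brouwer's theorem yields $x_{\varepsilon}\in\Omega_{\varepsilon}$ with $P_{\varepsilon}(\psi(x_{\varepsilon}))=x_{\varepsilon}$, so that $\Vert x_{\varepsilon}-\psi(x_{\varepsilon})\Vert=\Vert P_{\varepsilon}(\psi(x_{\varepsilon}))-\psi(x_{\varepsilon})\Vert<\varepsilon$.

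Finally I would pass to the limit. Taking $\varepsilon=1/k$ gives a sequence $x_{k}\in\Omega$ with $\Vert x_{k}-\psi(x_{k})\Vert<1/k$; since $\psi(\Omega)$ is relatively compact, a subsequence $\psi(x_{k_{j}})$ converges to some $y\in\Omega$, whence $x_{k_{j}}\to y$ and, by continuity of $\psi$, $\psi(x_{k_{j}})\to\psi(y)$, so $y=\psi(y)$ is the desired fixed point. The main obstacle is marshalling the two auxiliary inputs the argument relies on --- Mazur's theorem, that the closed convex hull of a compact set in a Banach space is compact, which legitimizes the reduction to a compact domain, and Brouwer's fixed point theorem in $\mathbb{R}^{n}$, the genuinely topological ingredient --- together with the careful verification that the Schauder projection is well defined and uniformly $\varepsilon$-close to the identity; once these are in place, the remaining steps are routine.
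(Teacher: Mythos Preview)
Your argument is correct and is precisely the classical proof of Schauder's fixed point theorem via finite-dimensional approximation and Brouwer's theorem. Note, however, that the paper does not supply its own proof of this lemma: it simply quotes the statement and refers the reader to standard references (Granas--Dugundji, Theorem~3.2, and Smart, Theorem~4.1.1). So there is nothing in the paper to compare your approach against; your proof is exactly the textbook argument those references contain.
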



\section{Main results}
\label{sec:3}

We now formulate and establish our results
on the approximate controllability of the nonlocal delay system
\eqref{eq:1.1}--\eqref{eq:1.2}. With this purpose, firstly we prove
the existence of solutions for the fractional control system
\eqref{eq:1.1}--\eqref{eq:1.2} by using Schauder's fixed point theorem
(Lemma~\ref{lem:2.3}). Then, we show that under certain assumptions,
the approximate controllability of the fractional system
\eqref{eq:1.1}--\eqref{eq:1.2} is implied by the approximate controllability
of the corresponding linear system \eqref{eq:2.3}--\eqref{eq:2.4}.
Before starting, we make the following hypotheses:
\begin{enumerate}
\item[(H$_{1}$)] The semigroup $Q(t)$ is a compact operator for $t>0$.

\item[(H$_{2}$)] For each $t\in J$, the function
$F(t, \cdot): S_{1}\times\cdots\times S_{p}\rightarrow H$ is continuous and for each
$W_{\delta}\in C[(J^{\prime}, S_{1})\times\cdots\times (J^{\prime}, S_{p}); H]$,
in particular, for every element $u\in \cap_{i}S_{i}, i=1,\ldots,p$, the function
$F(\cdot, W_{\delta}): J\rightarrow H$ is strongly measurable.

\item[(H$_{3}$)] There exist functions $m_{i}\in L^{\frac{1}{1-\alpha}}(J^{\prime}, \mathbb{R}^{+})$
such that $\vert F(t, W_{\delta}(t))\vert\leq m_{1}(\delta_{1}(t))+\cdots+m_{p}(\delta_{p}(t))$
for all $u\in \cap_{i}S_{i}$, $0<\alpha<1$, $i=1,\ldots,p$, and almost all $t\in J^{\prime}$.

\item[(H$_{4}$)] The function $h: C(J: H)\rightarrow H$ is bounded in $H$, that is,
there exists a constant $k_{1}>0$ such that $\Vert h(u)\Vert_{H}\leq k_{1}$.

\item[(H$_{5}$)] The delay arguments $\delta_{i}, \sigma_{j}: J\rightarrow J^{\prime}$
are absolutely continuous and satisfy
$\vert\delta_{i}(t)\vert\leq t$ and $\vert\sigma_{j}(t)\vert\leq t$,
for every $t\in J$, $i=1,\ldots,r$ and $j=1,\ldots,s$.

\item[(H$_{6}$)] The function $F: J\times H^{p}\rightarrow H$ is continuous
and uniformly bounded and there exist $N_{\delta_{1}},\ldots,N_{\delta_{p}}>0$
such that $\Vert F(t, W_{\delta}(t))\Vert\leq N_{\delta_{1}}+\cdots+N_{\delta_{p}}$
for all $(t, W_{\delta})\in J\times H^{p}$.
\end{enumerate}

For the proof of our Theorem~\ref{thm:3.2}, we make use of Lemma~\ref{lem:3.1}
whose proof can be found in \cite{AMA.34}.

\begin{lemma}
\label{lem:3.1}
If the assumption (H$_{1}$) is satisfied, then $S_{\alpha}(t)$
and $T_{\alpha}(t)$ are also compact operators for every $t>0$.
\end{lemma}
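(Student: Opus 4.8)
The plan is to deduce the compactness of $S_{\alpha}(t)$ and $T_{\alpha}(t)$ for $t>0$ from the compactness of the semigroup $Q(t)$ for $t>0$, exploiting the subordination formulas
$$
S_{\alpha}(t)=\int_{0}^{\infty}\zeta_{\alpha}(\theta)Q(t^{\alpha}\theta)\,d\theta,
\qquad
T_{\alpha}(t)=\alpha\int_{0}^{\infty}\theta\,\zeta_{\alpha}(\theta)Q(t^{\alpha}\theta)\,d\theta,
$$
together with the fact that $\zeta_{\alpha}$ is a probability density (so the first integral is a convex average of the compact operators $Q(t^{\alpha}\theta)$, and the second has finite total mass $\alpha\int_{0}^{\infty}\theta\,\zeta_{\alpha}(\theta)\,d\theta=\alpha/\Gamma(1+\alpha)=1/\Gamma(\alpha)$). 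The first thing I would do is fix $t>0$ and observe that, since $Q$ is a $C_0$-semigroup of compact operators for positive time, $Q(s)$ is compact for every $s>0$; thus the integrand $\theta\mapsto Q(t^{\alpha}\theta)$ consists of compact operators for every $\theta>0$.

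The core of the argument is an approximation step. For $\varepsilon>0$ set
$$
S_{\alpha}^{\varepsilon}(t)=\int_{\varepsilon}^{\infty}\zeta_{\alpha}(\theta)Q(t^{\alpha}\theta)\,d\theta,
\qquad
T_{\alpha}^{\varepsilon}(t)=\alpha\int_{\varepsilon}^{\infty}\theta\,\zeta_{\alpha}(\theta)Q(t^{\alpha}\theta)\,d\theta,
$$
and rewrite $S_{\alpha}^{\varepsilon}(t)=Q(t^{\alpha}\varepsilon)\int_{\varepsilon}^{\infty}\zeta_{\alpha}(\theta)Q(t^{\alpha}(\theta-\varepsilon))\,d\theta$ using the semigroup law. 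Since $t^{\alpha}\varepsilon>0$, the operator $Q(t^{\alpha}\varepsilon)$ is compact, and the remaining integral defines a bounded operator (its norm is at most $M\int_{\varepsilon}^{\infty}\zeta_{\alpha}(\theta)\,d\theta\le M$); hence $S_{\alpha}^{\varepsilon}(t)$ is the composition of a compact operator with a bounded one, so it is compact. The same factorisation, with the extra weight $\alpha\theta$, shows $T_{\alpha}^{\varepsilon}(t)$ is compact. I would then estimate the tails:
$$
\Vert S_{\alpha}(t)-S_{\alpha}^{\varepsilon}(t)\Vert
\le M\int_{0}^{\varepsilon}\zeta_{\alpha}(\theta)\,d\theta,
\qquad
\Vert T_{\alpha}(t)-T_{\alpha}^{\varepsilon}(t)\Vert
\le M\alpha\int_{0}^{\varepsilon}\theta\,\zeta_{\alpha}(\theta)\,d\theta,
$$
and both right-hand sides tend to $0$ as $\varepsilon\to0^{+}$ by dominated convergence, using that $\zeta_{\alpha}$ and $\theta\zeta_{\alpha}(\theta)$ are integrable on $(0,\infty)$. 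Therefore $S_{\alpha}(t)$ and $T_{\alpha}(t)$ are operator-norm limits of compact operators, and since the compact operators form a closed subspace of $L(H)$, both $S_{\alpha}(t)$ and $T_{\alpha}(t)$ are compact for every $t>0$.

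The main obstacle is the delicate point at $\theta\to0^{+}$: the integrand's operators $Q(t^{\alpha}\theta)$ need not be compact in the limit $\theta=0$ (indeed $Q(0)=I$ is not compact in infinite dimensions), so one cannot argue by a naive ``integral of compact operators is compact'' principle — the truncation at $\varepsilon$ and the semigroup factorisation $Q(t^{\alpha}\theta)=Q(t^{\alpha}\varepsilon)Q(t^{\alpha}(\theta-\varepsilon))$ are exactly what circumvents this. A secondary technical point is justifying that the tail bounds vanish, which rests on the integrability properties of $\zeta_{\alpha}$ recorded after Definition~\ref{def:2.4} (total mass one, and the moment identity $\alpha\int_{0}^{\infty}\theta\zeta_{\alpha}(\theta)\,d\theta=1/\Gamma(\alpha)$), already used implicitly in Lemma~\ref{lem:2.1}\eqref{lem:2.1:a}. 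Since these are standard facts about the Mainardi--Wright density, the proof is routine once the truncation device is in place; this is why the result is quoted from \cite{AMA.34} rather than proved in detail here.
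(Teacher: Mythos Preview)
Your argument is correct and is precisely the standard truncation-plus-factorisation proof that appears in \cite{AMA.34} (Zhou and Jiao, 2010b), which is the reference the paper defers to rather than giving its own proof of Lemma~\ref{lem:3.1}. There is nothing to add: the paper does not supply an independent argument, and your proposal reproduces the cited one faithfully.
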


\begin{theorem}
\label{thm:3.2}
If the hypotheses (H$_1$)--(H$_5$) are satisfied, then the fractional
nonlocal semilinear delay control system \eqref{eq:1.1}--\eqref{eq:1.2}
has a mild solution on $J$; here $M_{i}=\Vert m_{i}\Vert_{L^{\frac{1}{1-\alpha}}(J^{\prime})}$,
$i=1,\ldots,p$, and $M_{B_{j}}=\Vert B_{j}\Vert$, $j=1,\ldots,q$.
\end{theorem}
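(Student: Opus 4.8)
The plan is to recast the existence question as a fixed point problem for the mild-solution operator and then apply Schauder's theorem (Lemma~\ref{lem:2.3}). Fix an admissible control $\mu\in L^{2}(J,U)$ and define $\Phi\colon C(J,H)\to C(J,H)$ by
\begin{equation*}
(\Phi u)(t)=S_{\alpha}(t)\Bigl[u_{0}+v_{0}+\frac{1}{\Gamma(1-\alpha)}\int_{0}^{t}\frac{h[u(s)]}{(t-s)^{\alpha}}\,ds\Bigr]+\int_{0}^{t}(t-s)^{\alpha-1}T_{\alpha}(t-s)\bigl[F(s,W_{\delta}(s))+V_{\sigma}(s)\bigr]\,ds,
\end{equation*}
so that by Definition~\ref{def:2.4} a fixed point of $\Phi$ is precisely a mild solution of \eqref{eq:1.1}--\eqref{eq:1.2}. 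First I would check that $\Phi$ maps $C(J,H)$ into itself: the Bochner integrals exist by Remark~\ref{rem:2.2}(4), hypotheses (H$_2$)--(H$_4$) and the integrability on $J$ of the kernels $(t-s)^{\alpha-1}$ and $(t-s)^{-\alpha}$, while continuity of $t\mapsto(\Phi u)(t)$ follows from the strong continuity in Lemma~\ref{lem:2.1} and the absolute continuity of the integrals in their upper limit.

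Next I would exhibit a closed ball invariant under $\Phi$. With $B_{r}=\{u\in C(J,H):\|u\|_{J}\le r\}$, and using $\|S_{\alpha}(t)\|\le M$ and $\|T_{\alpha}(t)\|\le M/\Gamma(\alpha)$ from Lemma~\ref{lem:2.1}, the bound $\|h(u)\|_{H}\le k_{1}$ of (H$_4$), which yields $\frac{1}{\Gamma(1-\alpha)}\int_{0}^{t}(t-s)^{-\alpha}\|h[u(s)]\|\,ds\le k_{1}a^{1-\alpha}/\Gamma(2-\alpha)$, the growth condition (H$_3$) together with H\"older's inequality for the conjugate exponents $1/\alpha$ and $1/(1-\alpha)$ and the substitution controlled by the absolutely continuous arguments $\delta_{i}$ of (H$_5$), which bounds $\int_{0}^{t}(t-s)^{\alpha-1}\|F(s,W_{\delta}(s))\|\,ds$ by a constant multiple of $\sum_{i=1}^{p}M_{i}$, and the analogous H\"older estimate for the control term, which produces a constant multiple of $\sum_{j=1}^{q}M_{B_{j}}\|\mu\|_{L^{2}(J,U)}$, I would obtain a finite $r$ with $\Phi(B_{r})\subseteq B_{r}$; here $M_{i}$ and $M_{B_{j}}$ are exactly the constants named in the statement.

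Then I would prove that $\Phi$ is completely continuous on $B_{r}$. Continuity of $\Phi$ would follow from the continuity of $F(t,\cdot)$ in (H$_2$), the continuity of $h$, and Lebesgue's dominated convergence theorem, with dominating functions furnished by (H$_3$)--(H$_4$). For relative compactness of $\Phi(B_{r})$ in $C(J,H)$ I would use the Arzel\`a--Ascoli theorem, which requires: (i) for each fixed $t\in J$, the set $\{(\Phi u)(t):u\in B_{r}\}$ is relatively compact in $H$ --- here compactness of $S_{\alpha}(t)$ and $T_{\alpha}(t)$ for $t>0$ (Lemma~\ref{lem:3.1}) enters, combined with the customary $(\varepsilon,\eta)$-truncation of the convolution integral that factors out a compact operator $Q(\varepsilon^{\alpha}\eta)$ and whose remainder tends to $0$ uniformly in $u\in B_{r}$ as $\varepsilon,\eta\to0^{+}$; and (ii) equicontinuity of $\{\Phi u:u\in B_{r}\}$ on $J$ --- obtained from the strong continuity of $S_{\alpha}$ and $T_{\alpha}$ (Lemma~\ref{lem:2.1}) and the absolute continuity of the integrals, splitting $\int_{0}^{t_{2}}=\int_{0}^{t_{1}}+\int_{t_{1}}^{t_{2}}$ and adding and subtracting $(t_{2}-s)^{\alpha-1}T_{\alpha}(t_{1}-s)$. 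Since $B_{r}$ is closed, bounded and convex and $\Phi\colon B_{r}\to B_{r}$ is completely continuous, Lemma~\ref{lem:2.3} then yields a fixed point $u^{*}=\Phi u^{*}$, which is the desired mild solution.

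The step I expect to be the main obstacle is (i)--(ii): the singular convolution kernel $(t-s)^{\alpha-1}$ must be regularized in such a way as to simultaneously control the singularity at $s=t$, the behaviour of $\zeta_{\alpha}$ near $\theta=0$, and the extraction of a compact operator from $Q$, after which one has to check that the regularized operators approximate $\Phi$ uniformly on $B_{r}$; in addition, the singular nonlocal kernel $(t-s)^{-\alpha}$ needs care in the equicontinuity estimate as $t\to0^{+}$.
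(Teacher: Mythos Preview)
Your analytic core---Schauder via Arzel\`a--Ascoli, with pointwise relative compactness coming from Lemma~\ref{lem:3.1} through the $(\varepsilon,\eta)$-truncation that factors out a compact $Q(\varepsilon^{\alpha}\eta)$---is the same as the paper's. The setup differs: you fix an arbitrary control $\mu\in L^{2}(J,U)$ and treat $V_{\sigma}$ as a given inhomogeneity, whereas the paper does not fix $\mu$ but instead, for each $\beta>0$ and each target $u_{a}\in H$, builds an operator $\psi_{\beta}$ whose control is a \emph{feedback} depending on $u$ through
\[
v(\sigma_{q}(t))=B_{q}^{*}T_{\alpha}^{*}(a-t)\,\mathcal{R}(\beta,\Gamma^{a}_{0,\sigma})\,p(u(\cdot)),\qquad
p(u)=u_{a}-S_{\alpha}(a)u(0)-\int_{0}^{a}(a-s)^{\alpha-1}T_{\alpha}(a-s)F(s,W_{\delta}(s))\,ds.
\]
Your route is shorter and adequate for Theorem~\ref{thm:3.2} standing alone; the paper's route is designed so that the resulting fixed point $\hat u_{\beta}$ is already the object needed in Theorem~\ref{thm:3.3}, where $\hat u_{\beta}(a)\to u_{a}$ is shown as $\beta\to0^{+}$---with your construction that controllability argument would have to be redone separately. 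Two small caveats on your outline: (H$_{4}$) gives only boundedness of $h$, not the continuity you invoke for continuity of $\Phi$ (the paper also glosses this step), and equicontinuity of $\{\Phi u\}$ uniformly over $B_{r}$ requires the \emph{norm} continuity of $t\mapsto T_{\alpha}(t)$ that follows from (H$_{1}$) and Lemma~\ref{lem:3.1}, not merely the strong continuity of Lemma~\ref{lem:2.1}.
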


\begin{proof}
Consider the set
$$
S_{r} := \left\lbrace u\in C(J, H)\vert u(0)
=u_{0}+v_{0}+\frac{1}{\Gamma(1-\alpha)}
\int_{0}^{t}h[u(s)](t-s)^{-\alpha}ds, \Vert u\Vert\leq r\right\rbrace,
$$
where $r$ is a positive constant. For $\beta>0$,
we define the operator $\psi_{\beta}$ on $C(J, X)$ as follows:
$(\psi_{\beta}u)(t) := z(t)$ with
$$
z(t) := S_{\alpha}(t)u(0)+\int_{0}^{t}(t-s)^{\alpha-1}T_{\alpha}(t-s)[F(s,
W_{\delta}(s))+ B_{1}v(\sigma_{1}(s))+\cdots+B_{q}v(\sigma_{q}(s))]
ds,
$$
$v(\sigma_{1}(\cdot)) := B_{1}^{\ast}$, \ldots,
$v(\sigma_{q-1}(\cdot)) := B_{q-1}^{\ast}$,
$v(\sigma_{q}(t)) := B_{q}^{\ast}T_{\alpha}^{\ast}(a-t)
\mathcal{R}(\beta, \Gamma^{a}_{0, \sigma})p(u(\cdot))$, and
$$
p(u(\cdot)) := u_a-S_{\alpha}(a)u(0)
-\int_{0}^{a}(a-s)^{\alpha-1}T_{\alpha}(a-s)F(s, W_{\delta}(s))ds.
$$
In order to show that for all $\beta>0$ the operator $\psi_{\beta}$
from $C(J, H)$ into itself has a fixed point,
we divide the proof into several steps.
\textit{Step 1.} For $\beta>0$, there is a positive constant $r_{0}=r(\beta)$
such that $\psi_{\beta}: S_{r_{0}}\rightarrow S_{r_{0}}$. For any positive constant
$r$ and $u\in S_{r}$, and since $W_{\delta}(t)$ is continuous in $t$,
according to assumption (H$_2$) $F(t, W_{\delta}(t))$ is a measurable function on $J$
as well as function $(t-s)^{\alpha -1}\in L^{\frac{1}{\alpha}}(J^{\prime})$.
Using \ref{lem:2.1:a} of Lemma~\ref{lem:2.1}, (H$_3$) and H\"older's inequality, we get:
\begin{equation*}
\begin{split}
\Vert z(t)\Vert
&\leq M\Vert u(0)\Vert+\frac{M\alpha}{\Gamma (1+\alpha)}
\int_{0}^{t}(t-s)^{\alpha-1}\Vert F(s, W_{\delta}(s))
+ B_{1}v(\sigma_{1}(s))+\cdots+B_{q}v(\sigma_{q}(s))\Vert ds\\
&\leq M\left[\Vert u_{0}\Vert+\Vert v_{0}\Vert+\frac{k_{1}a^{1-\alpha}}{\Gamma(2-\alpha)}\right]\\
& \qquad +\frac{\alpha M \alpha^{\alpha}a^{2\alpha-1}}{\Gamma (1+\alpha)(2\alpha-1)^{\alpha}}
\left[M_{1}+\cdots+M_{p}+ M^{2}_{B_{1}}+\cdots+M^{2}_{B_{q-1}}+M_{B_{q}}\Vert v(\sigma_{q}(s))\Vert\right]
\end{split}
\end{equation*}
and
$$
\Vert v(\sigma_{q}(t))\Vert=\frac{1}{\beta}M_{B_{q}}M\left[\Vert u_{a}\Vert
+M\Vert u(0)\Vert+\frac{\alpha M \alpha^{\alpha}a^{2\alpha-1}\left(
M_{1}+\cdots+M_{p}\right)}{\Gamma (1+\alpha)(2\alpha-1)^{\alpha}}\right].
$$
We deduce that for large enough $r_{0}>0$, the inequality $\Vert(\psi_{\beta}u)(t)\Vert\leq r_{0}$
holds, i.e., $(\psi_{\beta}u)\in S_{r_{0}}$. Therefore, $\psi_{\beta}$ maps $S_{r_{0}}$ into itself.
\textit{Step 2.} For each $0<\alpha\leq1$, the operator $\psi_{\beta}$ maps $S_{r_{0}}$
into a relatively compact subset of $S_{r_{0}}$. Based on the infinite-dimensional
version of the Ascoli-Arzela theorem, we show that:
(i) the set $V(t) := \lbrace(\psi_{\beta}u)(t):
u(\cdot)\in S_{r_{0}}\rbrace$ is relatively compact in $H$
for any $t\in J$; (ii) the family of functions
$\lbrace(\psi_{\beta}u), u\in S_{r_{0}}\rbrace$
is relatively compact (for this, it suffices
to prove that $V(t)$ is bounded and equicontinuous).
We begin by proving (i). Let $t$ be a fixed real number, and let $\tau$
be a given real number satisfying $0\leq \tau<t$. For any $\eta>0$, define
\begin{equation*}
\begin{split}
(\psi^{\tau, \eta}_{\beta}u)(t)
&=\int_{\eta}^{\infty}\zeta_{\alpha}(\theta)Q(t^{\alpha}\theta)\left[u_{0}
+v_{0}+\frac{1}{\Gamma(1-\alpha)}\int_{0}^{t-\tau}(t-s)^{-\alpha}h(u(s))ds\right]d\theta\\
&\qquad +\alpha\int_{0}^{t-\tau}\int_{\eta}^{\infty}(t-s)^{\alpha-1}\theta\zeta_{\alpha}(\theta)
Q\left((t-s)^{\alpha}\theta\right)F(s, W_{\delta}(s))d\theta ds\\
& \qquad +\alpha\int_{0}^{t-\tau}\int_{\eta}^{\infty}(t-s)^{\alpha-1}
\theta\zeta_{\alpha}(\theta)Q((t-s)^{\alpha}\theta)V_{\sigma}(s)d\theta ds\\
&= Q(\tau^{\alpha}\eta)\int_{\eta}^{\infty}\zeta_{\alpha}(\theta)Q(t^{\alpha}\theta
-\tau^{\alpha}\eta)\left[u_{0}+v_{0}+\frac{1}{\Gamma(1-\alpha)}
\int_{0}^{t-\tau}(t-s)^{-\alpha}h(u(s))ds\right]d\theta\\
&\qquad + Q(\tau^{\alpha}\eta)\alpha\int_{0}^{t-\tau}\int_{\eta}^{\infty}(t-s)^{\alpha-1}
\theta\zeta_{\alpha}(\theta)Q((t-s)^{\alpha}\theta-\tau^{\alpha}\eta)F(s, W_{\delta}(s))d\theta ds\\
&\qquad + Q(\tau^{\alpha}\eta)\alpha\int_{0}^{t-\tau}\int_{\eta}^{\infty}(t-s)^{\alpha-1}
\theta\zeta_{\alpha}(\theta)Q((t-s)^{\alpha}\theta-\tau^{\alpha}\eta)V_{\sigma}(s)d\theta ds\\
& := Q(\tau^{\alpha}\eta)y(t, \tau).
\end{split}
\end{equation*}
Because $Q(\tau^{\alpha}\eta)$ is compact and $y(t, \tau)$ is bounded on $S_{r_{0}}$,
$\lbrace (\psi^{\tau, \eta}_{\beta}u)(t): u(\cdot)\in S_{r_{0}}\rbrace$
is a relatively compact set in $H$. On the other hand,
\begin{equation*}
\begin{split}
\Vert(\psi_{\beta}u)&(t)-(\psi^{\tau, \eta}_{\beta}u)(t)\Vert\\
&=\left\Vert\int_{0}^{\eta}\zeta_{\alpha}(\theta)Q(t^{\alpha}\theta)\left[u_{0}
+v_{0}+\frac{1}{\Gamma(1-\alpha)}\int_{0}^t(t-s)^{-\alpha}h(u(s))ds\right]d\theta\right.\\
&\qquad +\left.\int_{\eta}^{\infty}\zeta_{\alpha}(\theta)Q(t^{\alpha}\theta)\left[u_{0}
+v_{0}+\frac{1}{\Gamma(1-\alpha)}\int_{t-\tau}^{t}(t-s)^{-\alpha}h(u(s))ds\right]d\theta\right\Vert\\
&\qquad +\alpha\left\Vert\int_{0}^t\int_{0}^{\eta}(t-s)^{\alpha-1}\theta\zeta_{\alpha}(\theta)
Q((t-s)^{\alpha}\theta)[F(s, W_{\delta}(s))+V_{\sigma}(s)]d\theta ds\right.\\
&\qquad +\left.\int_{t-\tau}^{t}\int_{\eta}^{\infty}(t-s)^{\alpha-1}\theta\zeta_{\alpha}(\theta)
Q((t-s)^{\alpha}\theta)[F(s, W_{\delta}(s))+V_{\sigma}(s)]d\theta ds\right\Vert\\
&\leq M\left\lbrace\left[\Vert u_{0}\Vert+\Vert v_{0}\Vert
+\frac{k_{1}a^{1-\alpha}}{\Gamma(2-\alpha)}\right]
\int_{0}^{\eta}\zeta_{\alpha}(\theta)d\theta
+\left[\Vert u_{0}\Vert+\Vert v_{0}\Vert
+\frac{k_{1}\tau^{1-\alpha}}{\Gamma(2-\alpha)}\right]\right\rbrace\\
&\qquad +\frac{\alpha M \alpha^{\alpha}a^{2\alpha-1}}{\Gamma(1+\alpha)(2\alpha-1)^{\alpha}}\left\lbrace
M_{1}+\cdots+M_{p}+ M^{2}_{B_{1}}+\cdots+M^{2}_{B_{q-1}}\right.\\
&\qquad \left. +\frac{1}{\beta}M^{2}_{B_{q}}M
\left[\Vert u_{a}\Vert+M\Vert u(0)\Vert+\frac{\alpha
M \alpha^{\alpha}a^{2\alpha-1}(M_{1}+\cdots+M_{p})}{\Gamma (1+\alpha)(2\alpha-1)^{\alpha}}\right]\right\rbrace
\int_{0}^{\eta}\theta\zeta_{\alpha}(\theta)d\theta\\
&\qquad+ \frac{\alpha M \alpha^{\alpha}\tau^{2\alpha-1}}{\Gamma (1+\alpha)(2\alpha-1)^{\alpha}}\left\lbrace
M_{1}+\cdots+M_{p}+ M^{2}_{B_{1}}+\cdots+M^{2}_{B_{q-1}}\right.\\
&\qquad \left. +\frac{1}{\beta}M^{2}_{B_{q}}M \left[\Vert u_{a}\Vert+M\Vert u(0)\Vert+\frac{\alpha M
\alpha^{\alpha}a^{2\alpha-1}(M_{1}+\cdots+M_{p})}{\Gamma (1+\alpha)(2\alpha-1)^{\alpha}}\right]\right\rbrace.
\end{split}
\end{equation*}
This implies that there are relatively compact sets arbitrarily close to $V(t)$
for each $t\in(0, a]$. Hence, $V(t)$, $t\in(0, a]$, is relatively compact in $H$.
We now prove (ii). First we show that
$V(t):=\lbrace(\psi_{\beta}u)(\cdot): u(\cdot)\in S_{r_{0}}\rbrace$
is an equicontinuous family of functions on $[0, a]$. For any $u\in S_{r_{0}}$
and $0\leq t_{1}\leq t_{2}\leq a$,
\begin{equation*}
\begin{split}
& \Vert z(t_{2})-z(t_{1})\Vert
\leq \left\Vert S_{\alpha}(t_{2})[u_{0}
+v_{0}+\frac{1}{\Gamma(1-\alpha)}\int_{t_{1}}^{t_{2}}(t_{2}-s)^{-\alpha}h(u(s))ds]\right\Vert\\
&\quad + \left\Vert S_{\alpha}(t_{2})[u_{0}+v_{0}+\frac{1}{\Gamma(1-\alpha)}
\int_{0}^{t_{1}}[(t_{2}-s)^{-\alpha}-(t_{1}-s)^{-\alpha}]h(u(s))ds]\right\Vert\\
&\quad + \left\Vert[S_{\alpha}(t_{2})-S_{\alpha}(t_{1})][u_{0}+v_{0}
+\frac{1}{\Gamma(1-\alpha)}\int_{0}^{t_{1}}(t_{1}-s)^{-\alpha}h(u(s))ds]\right\Vert\\
&\quad + \left\Vert\int_{t_{1}}^{t_{2}}(t_{2}-s)^{\alpha-1}T_{\alpha}(t_{2}-s)
\left[F(s, W_{\delta}(s))
+ B_{1}v(\sigma_{1}(s))+\cdots+B_{q}v(\sigma_{q}(s))\right]ds\right\Vert\\
&\quad + \left\Vert\int_{0}^{t_{1}}[(t_{2}-s)^{\alpha-1}
-(t_{1}-s)^{\alpha-1}]T_{\alpha}(t_{2}-s)\left[F(s, W_{\delta}(s))
+ B_{1}v(\sigma_{1}(s))+\cdots+B_{q}v(\sigma_{q}(s))\right]ds\right\Vert\\
&\quad + \left\Vert\int_{0}^{t_{1}}(t_{1}-s)^{\alpha-1}[T_{\alpha}(t_{2}-s)-T_{\alpha}(t_{1}-s)]
\left[F(s, W_{\delta}(s))+ B_{1}v(\sigma_{1}(s))
+\cdots+B_{q}v(\sigma_{q}(s))\right]ds\right\Vert\\
&\leq I_{1}+I_{2}+I_{3}+I^{*}_{1}+I^{*}_{2}+I^{*}_{3}.
\end{split}
\end{equation*}
We have
\begin{gather*}
I_{1}\leq M\left\lbrace\Vert u_{0}\Vert+\Vert v_{0}\Vert
+\frac{k_{1}(t_{2}-t_{1})^{1-\alpha}}{\Gamma(2-\alpha)}\right\rbrace,\\
I_{2}\leq M\left\lbrace\Vert u_{0}\Vert+\Vert v_{0}\Vert+\frac{k_{1}[(t_{2}
-t_{1})^{1-\alpha}+t_{2}^{1-\alpha}+t_{1}^{1-\alpha}]}{\Gamma(2-\alpha)}\right\rbrace,\\
I_{3}\leq 2M\left\lbrace\Vert u_{0}\Vert+\Vert v_{0}\Vert
+\frac{k_{1}t_{1}^{1-\alpha}}{\Gamma(2-\alpha)}\right\rbrace.
\end{gather*}
Using H\"older's inequality and assumption (H$_3$), one gets
\begin{gather*}
I^{*}_{1}\leq \frac{\alpha M \alpha^{\alpha}(t_{2}-t_{1})^{2\alpha-1}}{
\Gamma (1+\alpha)(2\alpha-1)^{\alpha}} \left[M_{1}+\cdots+M_{p}+ M^{2}_{B_{1}}
+\cdots+M^{2}_{B_{q-1}}+M_{B_{q}}\Vert v(\sigma_{q})\Vert\right],\\
I^{*}_{2}\leq \frac{\alpha M \alpha^{\alpha}(t_{2}-t_{1})^{2\alpha-1}}{\Gamma (1
+\alpha)(2\alpha-1)^{\alpha}} \left[M_{1}+\cdots+M_{p}+ M^{2}_{B_{1}}+\cdots
+M^{2}_{B_{q-1}}+M_{B_{q}}\Vert v(\sigma_{q})\Vert\right].
\end{gather*}
Obviously, $I^{*}_{3}=0$ for $t_{1}=0$ and $0<t_{2}\leq a$.
For $t_{1}>0$ and $\epsilon>0$ small enough, we obtain
\begin{equation*}
\begin{split}
I^{*}_{3} &\leq \int_{0}^{t_{1}-\epsilon}(t_{1}-s)^{\alpha-1}\Vert
T_{\alpha}(t_{2}-s)-T_{\alpha}(t_{1}-s)\Vert\\
&\qquad \times[\Vert F(s, W_{\delta}(s))\Vert
+ \Vert B_{1}v(\sigma_{1}(s))\Vert+\cdots+\Vert B_{q}v(\sigma_{q}(s))\Vert]ds\\
&\qquad+\int_{t_{1}-\epsilon}^{t_{1}}(t_{1}-s)^{\alpha-1}\Vert
T_{\alpha}(t_{2}-s)-T_{\alpha}(t_{1}-s)\Vert\\
&\qquad\times[\Vert F(s, W_{\delta}(s))\Vert
+ \Vert B_{1}v(\sigma_{1}(s))\Vert+\cdots+\Vert B_{q}v(\sigma_{q}(s))\Vert]ds
\end{split}
\end{equation*}
\begin{equation*}
\begin{split}
&\leq \frac{\alpha^{\alpha}[t_{1}^{\frac{2\alpha-1}{\alpha}}
-\epsilon^{\frac{2\alpha-1}{\alpha}}]^{\alpha}}{\Gamma (1+\alpha)(2\alpha-1)^{\alpha}}
\left[M_{1}+\cdots+M_{p}+ M^{2}_{B_{1}}+\cdots+M^{2}_{B_{q-1}}+M_{B_{q}}\Vert v(\sigma_{q})\Vert\right]\\
&\qquad \times\sup_{s\in [0, t_{1}-\epsilon]}\Vert T_{\alpha}(t_{2}-s)-T_{\alpha}(t_{1}-s)\Vert\\
&\qquad +\frac{2\alpha M\alpha^{\alpha}\epsilon^{2\alpha-1}}{\Gamma (1+\alpha)(2\alpha-1)^{\alpha}}
\left[M_{1}+\cdots+M_{p}+ M^{2}_{B_{1}}+\cdots+M^{2}_{B_{q-1}}+M_{B_{q}}\Vert v(\sigma_{q})\Vert\right].
\end{split}
\end{equation*}
Note that $I_{1}, I_{2}, I_{3}, I^{*}_{1}, I^{*}_{2}\rightarrow 0$ as $t_{2}-t_{1}\rightarrow 0$.
Moreover, the assumption (H$_1$) together with Lemma~\ref{lem:3.1} imply the continuity of
$T_{\alpha}(t)$ in $t$ in the uniform operator topology. It is easy to verify that
$I^{*}_{3}$ tends to zero independently of $u\in S_{r_{0}}$ as
$t_{2}-t_{1}\rightarrow 0, \epsilon\rightarrow 0$. Consequently,
$I_{1}+I_{2}+I_{3}+I^{*}_{1}+I^{*}_{2}+I^{*}_{3}$ does not depend
on the particular choices of $u(\cdot)$ and tends to zero as $t_{2}-t_{1}\rightarrow 0$,
which means that $\lbrace(\psi_{\beta}u), u\in S_{r_{0}}\rbrace$ is equicontinuous.
Therefore, $\psi_{\beta}[S_{r_{0}}]$ is equicontinuous and also bounded.
By the Ascoli--Arzela theorem, $\psi_{\beta}[S_{r_{0}}]$ is relatively compact
in $C(J, H)$. On the other hand, it is easy to see that for all $\beta>0$, $\psi_{\beta}$
is continuous on $C(J, H)$. Hence, for all $\beta>0$, $\psi_{\beta}$ is a completely
continuous operator on $C(J, H)$. According to Schauder's fixed point theorem
(Lemma~\ref{lem:2.3}), $\psi_{\beta}$ has a fixed point. Thus, the fractional nonlocal
control system \eqref{eq:1.1}--\eqref{eq:1.2} has a mild solution on $J$.
\end{proof}

\begin{theorem}
\label{thm:3.3}
Assume that (H$_{1}$)--(H$_{6}$) are satisfied and the linear system
\eqref{eq:2.3}--\eqref{eq:2.4} is approximately controllable on $J$.
Then the semilinear fractional nonlocal delay system
\eqref{eq:1.1}--\eqref{eq:1.2} is approximately controllable on $J$.
\end{theorem}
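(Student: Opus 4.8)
The plan is to transfer the approximate controllability of the linear system to the semilinear one in the usual way, using the fixed points produced by Theorem~\ref{thm:3.2} together with the resolvent characterisation of Lemma~\ref{lem:2.2}. Fix an arbitrary target state $u_a\in H$. For each $\beta>0$, Theorem~\ref{thm:3.2} yields a fixed point $u^{\beta}\in S_{r_{0}}$ of $\psi_{\beta}$, which is a mild solution of \eqref{eq:1.1}--\eqref{eq:1.2} driven by the control whose $\sigma_{q}$-component is $v(\sigma_{q}(s))=B_{q}^{\ast}T_{\alpha}^{\ast}(a-s)\mathcal{R}(\beta,\Gamma^{a}_{0,\sigma})p(u^{\beta})$, where
\[
p(u^{\beta}):=u_a-S_{\alpha}(a)u^{\beta}(0)-\int_{0}^{a}(a-s)^{\alpha-1}T_{\alpha}(a-s)F(s,W_{\delta}(s))\,ds
\]
and $u^{\beta}(0)=u_0+v_0+\frac{1}{\Gamma(1-\alpha)}\int_{0}^{a}(a-s)^{-\alpha}h[u^{\beta}(s)]\,ds$. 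The goal is to show $u^{\beta}(a)\to u_a$ as $\beta\to 0^{+}$.

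First I would evaluate the mild solution at the terminal time $t=a$. Substituting the control into the mild-solution formula and using the definitions of $p$ and of $\Gamma^{a}_{0,\sigma}$ together with the algebraic identity $\Gamma^{a}_{0,\sigma}\mathcal{R}(\beta,\Gamma^{a}_{0,\sigma})=I-\beta\mathcal{R}(\beta,\Gamma^{a}_{0,\sigma})$, the terms involving $S_{\alpha}(a)u^{\beta}(0)$ and $F$ cancel and one is left with
\[
u^{\beta}(a)=u_a-\beta\mathcal{R}(\beta,\Gamma^{a}_{0,\sigma})\,p(u^{\beta}).
\]
Hence it suffices to prove that $\beta\mathcal{R}(\beta,\Gamma^{a}_{0,\sigma})\,p(u^{\beta})\to 0$ in $H$ as $\beta\to 0^{+}$.

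The second, and main, step is to decouple the $\beta$-dependence hidden inside $p(u^{\beta})$. By (H$_{6}$) the family $\{F(\cdot,W_{\delta}(\cdot))\}$ corresponding to $\{u^{\beta}\}$ is uniformly bounded, hence bounded and weakly sequentially precompact in the Hilbert space $L^{2}(J,H)$; pick a sequence $\beta_{n}\to 0^{+}$ along which it converges weakly to some $f\in L^{2}(J,H)$. By (H$_{4}$) the vectors $u^{\beta_{n}}(0)$ are bounded in $H$, so after passing to a further subsequence we may assume $S_{\alpha}(a)u^{\beta_{n}}(0)\to w$ in $H$; set $q:=u_a-w-\int_{0}^{a}(a-s)^{\alpha-1}T_{\alpha}(a-s)f(s)\,ds$. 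The crucial point is that, by (H$_{1}$) and Lemma~\ref{lem:3.1}, $T_{\alpha}(a-s)$ is compact for $s<a$, so the linear map $g\mapsto\int_{0}^{a}(a-s)^{\alpha-1}T_{\alpha}(a-s)g(s)\,ds$ is compact from $L^{2}(J,H)$ into $H$; it therefore sends the weakly convergent sequence $F(\cdot,W_{\delta}(\cdot))$ to a norm-convergent one, whence $\|p(u^{\beta_{n}})-q\|\to 0$.

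Finally I would combine the two steps. Since $\Gamma^{a}_{0,\sigma}$ is a bounded positive operator, $\|\beta\mathcal{R}(\beta,\Gamma^{a}_{0,\sigma})\|_{E(H)}\leq 1$ for every $\beta>0$, so that
\[
\|u^{\beta_{n}}(a)-u_a\|=\|\beta_{n}\mathcal{R}(\beta_{n},\Gamma^{a}_{0,\sigma})p(u^{\beta_{n}})\|\leq\|\beta_{n}\mathcal{R}(\beta_{n},\Gamma^{a}_{0,\sigma})q\|+\|p(u^{\beta_{n}})-q\|.
\]
The first term on the right tends to $0$ by Lemma~\ref{lem:2.2}, since \eqref{eq:2.3}--\eqref{eq:2.4} is assumed approximately controllable, and the second tends to $0$ by the previous step; a routine subsequence-of-subsequences argument then gives $u^{\beta}(a)\to u_a$ as $\beta\to 0^{+}$. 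As $u_a\in H$ was arbitrary, $\overline{\mathfrak{R}(a,u(0))}=H$, i.e.\ \eqref{eq:1.1}--\eqref{eq:1.2} is approximately controllable on $J$. I expect the real difficulty to lie in the second step: uniform boundedness of the nonlinearity (H$_{6}$) by itself only delivers weak precompactness, and it is the compactness of $T_{\alpha}$ coming from (H$_{1}$) via Lemma~\ref{lem:3.1} that upgrades this to the strong convergence of $p(u^{\beta})$ needed to close the estimate, while the $t$-dependent nonlocal term must be controlled simultaneously using the bound in (H$_{4}$).
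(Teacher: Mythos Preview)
Your proof is correct and follows essentially the same route as the paper: derive the terminal identity $u^{\beta}(a)=u_a-\beta\mathcal{R}(\beta,\Gamma^{a}_{0,\sigma})p(u^{\beta})$, use (H$_{6}$) to extract a weakly convergent subsequence of the nonlinearities in $L^{2}(J,H)$, upgrade this to strong convergence of $p(u^{\beta})$ via the compactness of $T_{\alpha}$ from (H$_{1}$)/Lemma~\ref{lem:3.1}, and conclude with Lemma~\ref{lem:2.2} and the contraction bound $\|\beta\mathcal{R}(\beta,\Gamma^{a}_{0,\sigma})\|\le 1$. If anything, your version is slightly more careful than the paper's: you explicitly pass to a further subsequence to stabilise the $\beta$-dependent nonlocal term $S_{\alpha}(a)u^{\beta}(0)$ (which the paper leaves implicit in its definition of $\omega$), and you argue compactness of the integral operator directly into $H$ rather than detouring through $C(J,H)$.
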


\begin{proof}
Let $\hat{u}_{\beta}(\cdot)$ be a fixed point of $\psi_{\beta}$ in $S_{r_{0}}$.
By Theorem~\ref{thm:3.2}, any fixed point of $\psi_{\beta}$ is a mild solution
of \eqref{eq:1.1}--\eqref{eq:1.2} under the multi-delay controls
$$
\left\{\begin{array}{lll}
\hat{\mu}_{\beta}(\sigma_{1}(\cdot))=B_{1}^{\ast},\\
\qquad \vdots\\
\hat{\mu}_{\beta}(\sigma_{q-1}(\cdot))=B_{q-1}^{\ast},\\
\hat{\mu}_{\beta}(\sigma_{q}(t))=B_{q}^{\ast}T_{\alpha}^{\ast}(a-t)
\mathcal{R}(\beta, \Gamma^{a}_{0, \sigma})p(\hat{u}_{\beta}),
\end{array} \right.
$$
and satisfies
\begin{equation}
\label{eq:3.1}
\hat{u}_{\beta}(a)=u_{a}+\beta\mathcal{R}(\beta,
\Gamma^{a}_{0, \sigma})p(\hat{u}_{\beta}).
\end{equation}
From condition (H$_{6}$), it follows that
$$
\int_{0}^{a}\Vert F(s, \hat{W}_{\beta}(\delta(s))\Vert^{2}ds
\leq a[N_{\delta_{1}}+\cdots+N_{\delta_{p}}]^{2},
$$
where $\hat{W}_{\beta}(\delta)=(A_{1}\hat{u}_{\beta}(\delta_{1}),
\ldots,A_{p}\hat{u}_{\beta}(\delta_{p}))$. Consequently, the sequence
$\lbrace F(t,\hat{W}_{\beta}(\delta(t))\rbrace_{t\in J}$ is bounded
in $L_{2}(J,H)$. Then there is a subsequence, denoted by
$\lbrace F_{k}(t,\hat{W}_{\beta}(\delta(t))\rbrace_{t\in J}$,
that converges weakly to a function $f(t)$ in $L_{2}(J, H)$. Define
$$
\omega := u_{a}-S_{\alpha}(a)u(0)-\int_{0}^{a}(a-s)^{\alpha-1}T_{\alpha}(a-s)f(s)ds.
$$
It follows that
\begin{equation}
\label{eq:3.2}
\begin{aligned}
\Vert p(\hat{u}_{\beta})-\omega\Vert&=\left\Vert
\int_{0}^{a}(a-s)^{\alpha-1}T_{\alpha}(a-s)[
F_{k}(t, \hat{W}_{\beta}(\delta(t))-f(s)]ds\right\Vert\\
&\leq\sup\limits_{t\in J}\left\Vert \int_{0}^{t}
(t-s)^{\alpha-1}T_{\alpha}(a-s)[F_{k}(t, \hat{W}_{\beta}(\delta(t))-f(s)]ds\right\Vert.
\end{aligned}
\end{equation}
Similarly as in the proof of Theorem~\ref{thm:3.2}, using the infinite-dimensional
version of the Ascoli--Arzela theorem one can show that the operator
$l(\cdot)\rightarrow \int_{0}^{\cdot}(\cdot-s)T_{\alpha}(\cdot-s)l(s)ds:
L_{2}(J, H)\rightarrow C(J, H)$ is compact. Consequently, the right-hand side
of the inequality \eqref{eq:3.2} tends to zero as $\beta\rightarrow 0^{+}$.
Then, from \eqref{eq:3.1}, we obtain that
\begin{equation}
\begin{aligned}
\Vert \hat{u}_{\beta}(a)-u_{a}\Vert
&=\Vert \beta\mathcal{R}(\beta, \Gamma^{a}_{0, \sigma})(\omega)\Vert
+\Vert \beta\mathcal{R}(\beta, \Gamma^{a}_{0, \sigma})\Vert \Vert p(\hat{u}_{\beta})-\omega\Vert\\
&\leq\Vert \beta\mathcal{R}(\beta, \Gamma^{a}_{0, \sigma})(\omega)\Vert
+\Vert p(\hat{u}_{\beta})-\omega\Vert\rightarrow 0.
\end{aligned} \label{eq:3.3}
\end{equation}
Using the inequality \eqref{eq:3.2} and Lemma~\ref{lem:2.2},
we conclude that \eqref{eq:3.3} tends to zero as $\beta\rightarrow 0^{+}$.
Hence, system \eqref{eq:1.1}--\eqref{eq:1.2} is approximately controllable on $J$.
\end{proof}


\section{An example}
\label{sec:4}

Consider the fractional nonlocal partial multi-delay control system
\begin{equation}
\label{eq:4.1}
\frac{\partial^{\alpha}u(x, t)}{\partial t^{\alpha}}+a(x)\frac{\partial^{2}u(x,t)}{\partial x^{2}}
=\Phi(t, D^{p}_{x}u(x, \delta_{p}(t)))+B^{q}\mu(x, \sigma_{q}(t))
\end{equation}
subject to
\begin{equation}
\label{eq:4.2}
u(x, 0)=g(x)+\sum\limits_{k=1}^{m}\frac{c_{k}}{\Gamma(1-\alpha)}
\int_{0}^{t_{k}}\frac{u(x, s_{k})}{(t_{k}-s_{k})^{\alpha}}ds_{k},
\quad x\in [0,\pi],
\end{equation}
\begin{equation}
\label{eq:4.3}
u(0,t)=u(\pi,t)=0,~t\in J,
\end{equation}
where $0<\alpha\leq1$, $0< t_{1}<\cdots<t_{m}< a$,
$\Phi$ is given as $F$, and the function $a(x)$ is continuous.
Let us take the operators $D^{p}_{x}$ and $B^{q}$ as follows:
\begin{gather*}
D^{p}_{x}u(x, \delta_{p}(t))=(\partial_{x} u(x, \sin t),
\partial_{x}^{2} u(x, \sin t/2),\ldots,\partial_{x}^{p} u(x, \sin t/p)),\\
B^{q}\mu(x, \sigma_{q}(t))=\xi(x, \sin t)+\xi(x, \sin t/2)+\cdots+\xi(x, \sin t/q),
\end{gather*}
such that the control function is $\mu(t)=\xi(\cdot, t)$, where
$\xi: [0,\pi]\times J\rightarrow [0,\pi]$ is continuous, the multi-delays
$\delta_{\tau}(t)=\sigma_{\tau}(t)=\sin(t/\tau)$,
$\tau=1$, $\ldots$, $\eta$, $\eta=\max(p, q)$, and the nonlocal function
is given by $h(u(\cdot, t))=\sum_{k=1}^{m}c_{k}u(\cdot, t_{k})$.
Assume that
$$
H=L^{2}[0, \pi],~ S_{r}
=\lbrace y\in L^{2}[0,\pi]: \Vert y\Vert\leq r\rbrace.
$$
We define $A: H\rightarrow H$ by $(Aw)(x)=a(x)w^{\prime\prime}$ with the domain
$$
D(A)=\lbrace w\in H: w, w^{\prime} \text{ are absolutely continuous, }
w^{\prime\prime}\in H, w(0)=w(\pi)=0\rbrace,
$$
dense in the Hilbert space $H$. Then,
$$
Aw=\sum\limits_{n=1}^{\infty}n^{2}(w,w_{n})w_{n}, \quad w\in D(A),
$$
where $(\cdot, \cdot)$ is the inner product in $L^{2}[0, \pi]$ and
$w_{n}(t)=\sqrt{\frac{2}{\pi}}\sin nt^{\alpha}$, $0<\alpha\leq 1$,
$t\in J$, $n=1,2,\ldots$, is the orthogonal set of eigenvectors in $A$.
It is well known that $A$ generates a compact, analytic and self-adjoint
semigroup $\lbrace Q(t)$, $t\geq 0\rbrace$ in $H$. For all $t\geq0$ and $w\in H$,
\begin{equation}
\label{eq:semigroupQ}
Q(t)w
=\sum\limits_{n=1}^{\infty}e^{-n^{2}t^{\alpha}}(w, w_{n})w_{n},
\quad \Vert Q(t)\Vert \leq e^{-t}.
\end{equation}
Therefore, the problem \eqref{eq:4.1}--\eqref{eq:4.3} is
a formulation of the control system \eqref{eq:1.1}--\eqref{eq:1.2}.
Moreover, all the assumptions (H$_1$)--(H$_6$) hold. Then, the
associated linear system of \eqref{eq:4.1}--\eqref{eq:4.3} is not
exactly controllable but it is approximately controllable.
Indeed, since the semigroup $Q(t)$ given by \eqref{eq:semigroupQ}
is compact, then the controllability operator
is also compact and hence the induced inverse does not exist because the state space is infinite dimensional.
Thus, the concept of exact controllability is too strong.
On the other hand, we have $\lambda\mathcal{R}(\lambda, \Gamma^{a}_{0,i})\rightarrow 0$ as $\lambda\rightarrow 0^{+}$,
$i=1,2$, in the strong operator topology, which is a necessary and sufficient condition
for the linear system to be approximately controllable.
Hence, by Theorems~\ref{thm:3.2} and \ref{thm:3.3}, the control system
\eqref{eq:4.1}--\eqref{eq:4.3} is approximately controllable on $J$.


\section*{Acknowledgments}

This work was partially supported by FEDER funds through
COMPETE -- Operational Programme Factors of Competitiveness
(``Programa Operacional Factores de Competitividade''),
and by Portuguese funds through the Center for Research and Development in Mathematics and Applications
(CIDMA, University of Aveiro) and the Portuguese Foundation for Science and Technology
(``FCT -- Funda\c{c}\~{a}o para a Ci\^{e}ncia e a Tecnologia''), within project PEst-C/MAT/UI4106/2011
with COMPETE number FCOMP-01-0124-FEDER-022690. The authors are grateful to
FCT and CIDMA for the post-doc fellowship BPD/UA/CIDMA/2011, PD2012-MTSC,
and to three anonymous referees for valuable suggestions and comments,
which improved the quality of the paper.


\small


\label{lastpage}


\end{document}